\documentclass[a4paper,11pt]{amsart}

\usepackage{amsmath,amssymb,amsthm}
\usepackage{indentfirst}
\usepackage{graphicx}
\usepackage{enumerate}
\usepackage[colorlinks, citecolor=green, linkcolor=red]{hyperref}
\usepackage{cite}
\hypersetup{pdfborder={0 0 0}} 

\newtheorem{theorem}{Theorem}[section]
\newtheorem{corollary}[theorem]{Corollary}

\newtheorem{proposition}[theorem]{Proposition}

\newtheorem{remark}[theorem]{Remark}

\theoremstyle{definition}

\newcommand{\R}{\mathbb{R}}
\newcommand{\Sp}{\mathbb{S}}
\newcommand{\Hi}{\mathbb{H}}
\newcommand{\B}{\mathbb{B}}
\DeclareMathOperator{\oRic}{Ric}

\DeclareMathOperator{\tr}{tr}
\newcommand{\oi}{\mathbf{i}}

\title[Curvature Inhomogeneous Submanifolds with Constant Ricci Eigenvalues]{Examples of Curvature Inhomogeneous Submanifolds with Constant Ricci Eigenvalues}

\author[J. Q. Ge]{Jianquan Ge}
\address{School of Mathematical Sciences, Beijing Normal University, Beijing 100875, P. R. China}
\email{jqge@bnu.edu.cn}
\author[Y. Y. Zhao]{Yuyang Zhao$^{*}$}
\address{School of Mathematical Sciences, Beijing Normal University, Beijing 100875, P. R. China}
\email{yyzhao24@mail.bnu.edu.cn}

\subjclass[2020]{53C42, 53C40, 53C25, 53B25}
\thanks{$^{*}$ Corresponding author.}
\date{}
\keywords{constant Ricci eigenvalues, curvature inhomogeneous, isometric immersions, Einstein warped product, Schwarzschild--Tangherlini metric}
\thanks{J. Q. Ge is partially supported by the NSFC (No. 12571049) and the Fundamental Research Funds for the Central Universities.}

\begin{document}

\begin{abstract}
	We construct two families of curvature inhomogeneous Riemannian manifolds with constant Ricci eigenvalues. The first, derived from the Einstein warped products, has two distinct Ricci eigenvalues and admits a local isometric immersion of minimum codimension two. The second, arising from the Riemannian Schwarzschild--Tangherlini manifold, has $k+1$ distinct Ricci eigenvalues and admits an isometric embedding of codimension $k+2$, which is the smallest within the adapted product class.
\end{abstract}

\maketitle

\section{Introduction}\label{sec:intro}
Following Singer \cite{Singer1960}, a Riemannian manifold $(M^{n},g)$ is \emph{curvature homogeneous} if for any two points $p,q\in M$, there exists a linear isometry $J\colon T_pM\to T_qM$ such that $J^{*}R_q=R_p$, where $R$ denotes the Riemann curvature tensor. Curvature homogeneity implies that $(M^{n},g)$ has \emph{constant Ricci eigenvalues}, i.e., the eigenvalues of the Ricci endomorphism $\widehat{\oRic}$ are constant functions on $M$. For $n=3$, the converse holds because $R$ is determined algebraically by $\widehat{\oRic}$. For $n\geq 4$, however, our examples below show that constancy of the Ricci eigenvalues is strictly weaker than curvature homogeneity. We call a manifold \emph{curvature inhomogeneous} if it is not curvature homogeneous.

For hypersurfaces of real space forms, Ge and Zhao \cite{GZ-CREH} showed that constant Ricci eigenvalues are equivalent to curvature homogeneity. Tsukada's classification of curvature homogeneous hypersurfaces \cite{Tsukada88}, together with the remaining rank-two cases in $\Sp^4$ and $\Hi^4$, recently settled by Bryant--Florit--Ziller \cite{BFZ}, therefore gives a complete classification of hypersurfaces with constant Ricci eigenvalues.

This equivalence immediately yields an obstruction to isometric immersions into space forms of codimension one.

\begin{corollary}[{\cite[Corollary 1.2]{GZ-CREH}}]\label{cor:nohyp}
	A connected curvature inhomogeneous Riemannian manifold of dimension at least four with constant Ricci eigenvalues cannot be isometrically immersed as a hypersurface into any real space form.
\end{corollary}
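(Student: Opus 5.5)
The corollary should follow by contradiction from the equivalence of Ge and Zhao \cite{GZ-CREH} quoted just before it. For hypersurfaces of real space forms, that result says constant Ricci eigenvalues are equivalent to curvature homogeneity. Suppose a connected curvature inhomogeneous $(M^n,g)$ with $n\ge 4$ and constant Ricci eigenvalues admits an isometric immersion $f\colon M^n\to \bar M^{n+1}(c)$ into a real space form. Then $f(M)$ is, locally, a hypersurface of $\bar M^{n+1}(c)$ with constant Ricci eigenvalues. The equivalence then forces $(M,g)$ to be curvature homogeneous, which contradicts the hypothesis.

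The steps, in order, are these.

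\textbf{Step 1.} Reduce to the local setting where the equivalence applies. An immersion is locally an embedding, and curvature homogeneity is a pointwise comparison of curvature tensors. So it is enough that the equivalence holds for the intrinsic metric of an immersed hypersurface: all the quantities involved (the Gauss equation, $R$ and $\widehat{\oRic}$) are local.

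\textbf{Step 2.} Conclude that the curvature tensors at any two points $p,q\in M$ are linearly isometric. This is where connectedness is used. If the equivalence in \cite{GZ-CREH} is stated only on open subsets, or is local in nature, I would argue as follows. The constancy of the Ricci eigenvalues is global, and it pins down, through the Gauss equation $R=c\,g\wedge g+\tfrac12 A\wedge A$ and the algebraic structure of the shape operator $A$, the same model curvature tensor at every point. Concretely, $\widehat{\oRic}=(n-1)c\,I+HA-A^2$ with $H=\tr A$. In the generic case the constant eigenvalues of $\widehat{\oRic}$ determine the principal curvatures and hence $R$ up to isometry. The connectedness hypothesis, together with the constancy of the eigenvalues on all of $M$, gives a single model tensor throughout.

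\textbf{Step 3.} Observe that this is exactly curvature homogeneity, which contradicts the assumption that $M$ is curvature inhomogeneous. The restriction $n\ge4$ only reflects that for $n=3$ no curvature inhomogeneous examples with constant Ricci eigenvalues exist, so the statement would be vacuous there.

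The main obstacle, if one does not treat \cite{GZ-CREH} as a black box, is the degenerate case. There $\widehat{\oRic}$ does not determine $A$ pointwise: for instance, $HA-A^2$ has the same eigenvalues for different choices of $A$, and low-rank $A$ occurs. The rank of $A$ or the choice of root might then jump from point to point. I would handle this by a continuity and connectedness argument. The admissible principal-curvature configurations compatible with the fixed Ricci spectrum form a finite set. By a trace identity the mean curvature $H$ is then constant on the open dense regular set, so it is locally constant and hence constant on the connected $M$. This gives a uniform model tensor. In the corollary itself, however, this is delegated to the cited theorem, and the proof is the short contradiction above.
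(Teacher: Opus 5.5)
Your argument matches the paper's: the corollary is an immediate contradiction with the Ge--Zhao equivalence (constant Ricci eigenvalues $\Leftrightarrow$ curvature homogeneity for hypersurfaces of real space forms), and nothing more is needed. One caution on your optional sketch of the cited theorem: the claims that the admissible principal-curvature configurations form a finite set and that $H$ must be constant are false. For example, a flat cylinder $\gamma\times\R^{n-1}\subset\R^{n+1}$ over a plane curve of nonconstant curvature has $\widehat{\oRic}\equiv 0$ but nonconstant $H$, so in such degenerate cases you would have to show directly that $R$ (not $A$ or $H$) stays constant.
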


As curvature homogeneity is a local property, Corollary~\ref{cor:nohyp} applies to any connected open subset that is curvature inhomogeneous and has constant Ricci eigenvalues. It remains natural to ask whether this obstruction disappears in higher codimension and, if so, what the minimum codimension is. We answer this by constructing curvature inhomogeneous Riemannian manifolds with constant Ricci eigenvalues that admit isometric immersions into Euclidean space of small, explicitly controlled codimension.

The Einstein warped products used below originate from Dajczer--Onti--Vlachos \cite{DOV}, and their complete Ricci-flat case is exactly the Riemannian Schwarzschild--Tangherlini manifolds, named after Schwarzschild \cite{Schwarzschild16} and Tangherlini \cite{Tangherlini63}. The new contributions here are the verification of their curvature inhomogeneity, the proper embedding result for the Riemannian Schwarzschild--Tangherlini manifolds, the cylindrical and spherical product constructions with at least two distinct constant Ricci eigenvalues, and the sharp codimension bounds for immersions adapted to the product structure.

Our first result produces examples with two distinct constant Ricci eigenvalues and minimum local Euclidean codimension two.

\begin{theorem}\label{thm:1}
	Let $n\geq 4$ and $\ell\geq 1$. There exists a Riemannian product
	$$M^{n+\ell}=N^n\times\R^\ell,$$
	where $(N^n,g_N)$ is an Einstein warped product satisfying $\oRic^N=\rho\,g_N$ for some $\rho\neq 0$ (see Subsection \ref{sec:EWP-def}), with the following properties:
	\begin{enumerate}[\rm(i)]
		\item $M^{n+\ell}$ has exactly two distinct constant Ricci eigenvalues, namely $0$ and $\rho$, with multiplicities $\ell$ and $n$;
		\item $M^{n+\ell}$ is curvature inhomogeneous;
		\item $M^{n+\ell}$ admits a local isometric immersion into $\R^{n+\ell+2}$ with flat normal bundle, and the codimension two is the smallest.
	\end{enumerate}
\end{theorem}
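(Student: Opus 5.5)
We take $N^n$ to be one of the Dajczer--Onti--Vlachos Einstein warped products $N^n=I\times_f\Sp^{n-1}$ (or a warped product over a surface) with $\oRic^N=\rho g_N$, $\rho\neq0$. The essential feature we use is that $N^n$ admits a local isometric immersion as a rotation-type submanifold of $\R^{n+1}$ or $\R^{n+2}$. Item (i) is then immediate. In $M=N\times\R^\ell$ the Ricci tensor splits as $\oRic^N\oplus 0$, so it equals $\rho$ on $TN$ (multiplicity $n$) and $0$ on $T\R^\ell$ (multiplicity $\ell$). Since $\rho\neq0$, these two eigenvalues are distinct.

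For (ii) we compare a curvature invariant that would be constant under curvature homogeneity. Curvature homogeneity of $M$ forces $|R^M|^2=|R^N|^2$ to be constant. Since $N$ is Einstein, $|R^N|^2=|W^N|^2+\frac{2\rho^2 n}{n-1}$ up to the standard normalization. So it suffices to show that the Weyl norm of $N$ is nonconstant. For a warped product $dr^2+f(r)^2 g_{\Sp^{n-1}}$ the curvature is determined by the radial curvature $-f''/f$ and the tangential curvature $(1-f'^2)/f^2$. The Einstein equations make these take the form $\frac{\rho}{n-1}\mp c\,f^{-n}$ up to constants, for a nonzero integration constant $c$; the DOV examples are exactly the non-space-form ones. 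The Weyl tensor is then a nonzero multiple of $c f^{-n}$. Because $f$ is nonconstant, $|W|^2$ is nonconstant and $M$ is curvature inhomogeneous. The main obstacle is to verify this formula from the explicit ODE in Subsection~\ref{sec:EWP-def}, which amounts to a routine computation of the eigenvalues of the curvature operator.

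For the existence part of (iii) we use the explicit immersion of $N^n$ from \cite{DOV}. If $N$ is realized as a hypersurface with flat normal bundle in $\R^{n+2}$ (a rotational submanifold over a profile surface), then the product immersion $N\times\R^\ell\to\R^{n+2}\times\R^\ell$ is isometric. It has codimension two, and its normal bundle is flat because it is pulled back from that of $N$. If instead $N$ only sits in $\R^{n+1}$, we would have to check whether an Einstein hypersurface with nonconstant Weyl tensor exists; by Fialkow's theorem it does not. Hence the DOV immersion is genuinely codimension two, and we expect to cite the paper's construction directly.

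Minimality of codimension two follows from Corollary~\ref{cor:nohyp}. By (i) and (ii), every connected open subset of $M$ is curvature inhomogeneous with constant Ricci eigenvalues. Its dimension is $n+\ell\geq5\geq4$, so it admits no local isometric immersion as a hypersurface of $\R^{n+\ell+1}$. Codimension zero is also impossible, since $M$ is not flat ($\rho\neq0$). Therefore codimension two is the smallest.
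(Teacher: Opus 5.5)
\textbf{Verdict: genuine gap in (ii).} Your overall architecture matches the paper's:
\begin{itemize}
\item split $\oRic^M=\oRic^N\oplus 0$;
\item detect inhomogeneity through $|R^M|^2=|R^N|^2=|W^N|^2+\frac{2n}{n-1}\rho^2$;
\item take the cylinder $f\times\mathrm{id}_{\R^\ell}$ over the Dajczer--Onti--Vlachos codimension-two immersion;
\item get minimality from nonflatness plus Corollary~\ref{cor:nohyp}.
\end{itemize}
Parts (i) and (iii) are fine modulo wording: a codimension-two submanifold of $\R^{n+2}$ is not a ``hypersurface'', and the $\R^{n+1}$ alternative never arises. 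The problem is (ii). It is the only non-formal input, and the minimality in (iii) also rests on it.

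You set the curvature computation on $dr^2+f(r)^2g_{\Sp^{n-1}}$ and assert that the Einstein condition gives curvatures $\frac{\rho}{n-1}\mp c f^{-n}$ with nonzero Weyl tensor. That is false for this class of metrics. The equation $\oRic(\partial_r,\partial_r)=-(n-1)f''/f=\rho$ forces the radial curvature to equal the constant $\frac{\rho}{n-1}$. The tangential Ricci equation then forces $(1-f'^2)/f^2=\frac{\rho}{n-1}$ as well, so the metric is a space form. Independently, every such metric is conformally flat: $dr^2+f^2g_{\Sp^{n-1}}=f^2(ds^2+g_{\Sp^{n-1}})$ with $ds=dr/f$, so $W\equiv 0$. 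No nonzero $c$ can occur in this class.

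The examples of Subsection~\ref{sec:EWP-def} instead live over a two-dimensional base: $N^n=L^2\times_\varphi\Sp^{n-2}$ with $g_L=dt^2+\varphi'^2du^2$. The extra direction $\partial_u$ produces a third sectional curvature type, $K_{12}=-\varphi'''/\varphi'$.

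What your argument needs, and what you deferred as ``the main obstacle'' with a misstated target formula, is exactly the paper's Proposition~\ref{pro:N}:
\begin{itemize}
\item in the adapted frame the curvature operator is diagonal;
\item $K_{12}$, $K_{i\alpha}$, $K_{\alpha\beta}$ are given by \eqref{eq:K_N}, and their deviations from $\frac{\rho}{n-1}$ are multiples of $c\,\varphi^{-(n-1)}$, not of $c f^{-n}$;
\item summing gives $|R^N|^2=\frac{2n}{n-1}\rho^2+(n-1)(n-2)^2(n-3)c^2\varphi^{2-2n}$.
\end{itemize}

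To apply Corollary~\ref{cor:nohyp} to arbitrarily small connected open subsets, you also need $|R^N|^2$ to be nonconstant on \emph{every} open set. That uses $\varphi'\neq0$ everywhere, from \eqref{eq:size}; knowing only that $\varphi$ is nonconstant is not enough. Once this computation is supplied on the correct model, your proof coincides with the paper's.
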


Our second result produces examples with arbitrarily many distinct constant Ricci eigenvalues and an explicit extrinsic product embedding.

\begin{theorem}\label{thm:2}
	Let $n\geq 4$ and $k\geq 1$. Let $n_j\geq 2$ and $r_j>0$ be chosen so that the constants $\rho_j:=\frac{n_j-1}{r_j^2}$, $1\leq j\leq k$, are pairwise distinct. There exists a complete Riemannian product
	$$W_k^{m}=N_0^n\times\prod_{j=1}^k\Sp^{n_j}(r_j),\qquad m=n+\sum_{j=1}^k n_j,$$
	where $(N_0^n,g_{N_0})$ is the complete Ricci-flat Riemannian Schwarzschild--Tangherlini manifold (see Subsection~\ref{sec:RFS}), with the following properties:
	\begin{enumerate}[\rm(i)]
		\item $W_k^{m}$ has exactly $k+1$ distinct constant Ricci eigenvalues, namely $0,\rho_1,\ldots,\rho_k$, with multiplicities $n,n_1,\ldots,n_k$;
		\item $W_k^{m}$ is curvature inhomogeneous;
		\item $W_k^{m}$ admits a global isometric embedding into $\R^{m+k+2}$ with flat normal bundle.
	\end{enumerate}
\end{theorem}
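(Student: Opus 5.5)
We prove Theorem~\ref{thm:2} one item at a time, working with the explicit model of $N_0^n$. On $\R^2\times\Sp^{n-2}$ write $g_{N_0}=dr^2/f(r)+f(r)\,d\theta^2+r^2g_{\Sp^{n-2}}$ with $f(r)=1-(r_0/r)^{n-3}$ and $r\ge r_0$. The horizon $r=r_0$ is the center of the $(r,\theta)$-plane, $\theta$ is periodic with the period forced by smoothness there, and this is a complete Ricci-flat metric.

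\textbf{Items (i) and (ii).} For (i), the Ricci tensor of a Riemannian product is the direct sum of the Ricci tensors of the factors. We have $\oRic^{N_0}=0$ and $\oRic^{\Sp^{n_j}(r_j)}=\rho_j g$. So the eigenvalues are $0,\rho_1,\dots,\rho_k$ with multiplicities $n,n_1,\dots,n_k$. These are pairwise distinct because each $\rho_j>0$ and the $\rho_j$ are distinct by hypothesis.

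For (ii), we use a curvature invariant. Curvature homogeneity forces every $O(T_pM)$-invariant polynomial in $R$ to be constant, for instance $|R|^2$. For a product, $|R|^2=|R^{N_0}|^2+\sum_j|R^{\Sp^{n_j}(r_j)}|^2$, and the sphere terms are constant. It therefore suffices to show that the Kretschmann scalar $|R^{N_0}|^2$ is nonconstant.

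The metric is a doubly warped product, so its curvature operator is diagonal in the orthonormal frame $\{\partial_r,\partial_\theta,\text{sphere directions}\}$. The sectional curvatures are
\begin{itemize}
\item $-\tfrac12 f''$ on the $(r,\theta)$-plane,
\item $-\tfrac{f'}{2r}$ on the mixed planes,
\item $\tfrac{1-f}{r^2}$ on the sphere planes.
\end{itemize}
Each of these is a constant multiple of $r^{-(n-1)}$. Hence $|R|^2=c_n\,r_0^{2(n-3)}r^{-2(n-1)}$ with $c_n>0$, which is strictly decreasing in $r$. Thus $W_k^m$ is curvature inhomogeneous. The only thing to check is that $c_n\neq0$, which is immediate because the sphere-plane curvature $r_0^{n-3}/r^{n-1}$ is nonzero.

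\textbf{Item (iii): embedding of $N_0$.} We will build an isometric embedding $N_0^n\to\R^{n+2}$ with flat normal bundle and take the product with the round embeddings $\Sp^{n_j}(r_j)\subset\R^{n_j+1}$. Each sphere contributes codimension $1$ with flat normal bundle, so the total codimension is $2+k$ and the normal bundle of the product embedding is flat. Properness and injectivity are inherited factorwise.

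For $N_0$, use coordinates $(x,y,z)\in\R^2\times\R^{n-1}$, that is $\R^{n+2}$, and set
\begin{equation*}
F(r,\theta,u)=\bigl(a(r)\cos(\theta/\kappa),\,a(r)\sin(\theta/\kappa),\,r\,u\bigr),\qquad u\in\Sp^{n-2}\subset\R^{n-1},
\end{equation*}
where $\kappa$ normalizes the period of $\theta$ and $a(r)=\sqrt f/\kappa'$ for a suitable constant $\kappa'$. Then the $\theta$-circle has length element $\sqrt f\,d\theta$, the sphere has radius $r$, and the induced $dr^2$ coefficient is $a'(r)^2+1$.

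This does not match $1/f$, so we must modify the construction. Instead we use a profile curve in a $3$-dimensional meridian. We embed the $2$-dimensional $(r,\theta)$-disk of metric $dr^2/f+f\,d\theta^2$ as a surface of revolution in $\R^3$ with radius function $\sqrt f/\kappa$. Separately, we use the remaining ambient directions to realize the $\Sp^{n-2}$ factor with radius $r$ through a cone-type coordinate. In short, we embed $N_0$ as $\{(\gamma(r)\text{-rotation},\ r u)\}$ in $\R^{3}\times\R^{n-1}$, where $\gamma(r)=(h(r),\sqrt f/\kappa)$ and $h$ is chosen so that
\[
h'^2+\bigl((\sqrt f)'/\kappa\bigr)^2+1=1/f .
\]
Solvability requires $1/f-1-(f')^2/(4f\kappa^2)\ge0$. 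Near $r_0$ this holds precisely because $\kappa=f'(r_0)/2$ is the smoothness normalization, and the quantity vanishes to the right order. For large $r$, the left side is of order $r^{-(n-3)}$, which dominates $(f')^2\sim r^{-2(n-2)}$.

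\textbf{Main obstacle.} The hardest step is verifying this inequality on the whole range $r\ge r_0$, together with smoothness at the horizon. The total dimension is $3+(n-1)=n+2$. Flatness of the normal bundle follows because the embedding is a product-like rotation whose normal directions (the profile normal and the radial $u$-related normal) are principal and invariant under the $SO(2)\times SO(n-1)$ symmetry; this forces the normal curvature to vanish.
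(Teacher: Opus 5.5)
\textbf{Items (i) and (ii).} These are correct and follow the paper's route: the Ricci tensor of the product is the direct sum of the factors' Ricci tensors, and the Kretschmann scalar of $N_0$ is nonconstant. Your sectional curvatures agree with the paper's.

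\textbf{Item (iii).} Your second construction takes rotation radius $\sqrt f/\kappa$, hence angle $\kappa\theta$ with $\kappa=f'(r_0)/2$, together with a height $h$ and the block $ru$. This is exactly the paper's embedding $f_{N_0}$ rewritten in the $r$-coordinate, and taking the product with the round spheres is also the paper's step. The gap is that you stop where (iii) actually needs proof. You call global solvability and smoothness at the horizon the ``main obstacle'' and leave both open. Your remark ``properness and injectivity are inherited factorwise'' presupposes that the $N_0$ factor is already a closed embedding. Moreover, the chart $(r,\theta)$ does not cover the bolt $B=\{r=r_0\}$. So even granting the inequality, you have only an isometric immersion of the incomplete manifold $N_0\setminus B$, not a global embedding of $W_k^m$.

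\textbf{What is missing.} Each step is short.
\begin{enumerate}
\item \emph{Positivity of $h'^2$.} Set $x=(r_0/r)^{n-3}\in(0,1)$ and $\kappa=(n-3)/(2r_0)$. Then one gets exactly $h'^2=x\bigl(1-x^{(n-1)/(n-3)}\bigr)/(1-x)>0$. This tends to $\frac{n-1}{n-3}$, not $0$, as $r\downarrow r_0$: only the numerator $1-f-f'^2/(4\kappa^2)$ vanishes there, cancelling the pole of $1/f$. So your near-horizon heuristic is misstated, and your two asymptotic regimes would not cover intermediate $r$ anyway.
\item \emph{Smoothness and rank $n$ at $B$.} Pass to the arclength coordinate $t$, with $r=\varphi(t)$ even, $\sqrt f=\varphi'(t)$ odd and $\varphi''(0)=\kappa$. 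Then $h$ and $r$ are smooth functions of $t^2$ (Whitney). In Cartesian coordinates $(x^1,x^2)=t(\cos\kappa\theta,\sin\kappa\theta)$ the rotation block is $\frac{\varphi'(t)}{\kappa t}(x^1,x^2)$, which is smooth with identity differential at $o$. The block $ru$ has rank $n-2$ on $T\Sp^{n-2}$, so the total rank is $n$.
\item \emph{Injectivity and properness of the $N_0$ factor.} Since $|ru|=r$, the last block recovers $r$ and $u$, and the rotation block recovers $\theta$ when $r>r_0$. Properness follows from $|F|\geq r$ and compactness of $\{r\leq b\}$.
\item \emph{Flat normal bundle.} Invariance under $SO(2)\times SO(n-1)$ alone only makes each shape operator scalar on the sphere block. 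You must also check that the $(r,\theta)$ block is diagonal, for instance because $\partial_r\partial_\theta F$ is a multiple of $\partial_\theta F$. Then all shape operators commute and the Ricci equation gives $R^\perp=0$. The paper simply cites Dajczer--Onti--Vlachos for this.
\end{enumerate}
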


The codimension in Theorem~\ref{thm:2}(iii) is sharp within a natural class, and the constraint on lower-codimension immersions can be made precise. Write the factors of $W_k^m$ as $P_0=N_0^n$ and $P_j=\Sp^{n_j}(r_j)$ for $1\leq j\leq k$, so that $TW_k^m=\bigoplus_{j=0}^k TP_j$. The second fundamental form $\alpha$ of an isometric immersion of $W_k^m$ is \emph{adapted} to the product structure if its mixed terms vanish, i.e., $\alpha(TP_i,TP_j)=0$ for $i\neq j$.

\begin{theorem}\label{thm:3}
Let $W_k^m=N_0^n\times\prod_{j=1}^k\Sp^{n_j}(r_j)$ as above, with $n\geq 4$ and $n_j\geq 2$.
\begin{enumerate}[\rm(a)]
\item Every local isometric immersion of $W_k^m$ into Euclidean space with adapted second fundamental form has codimension at least $k+2$. This holds regardless of whether the $\rho_j$ are distinct.
\item If the $\rho_j$ are pairwise distinct and $\Phi\colon W_k^m\to\R^{m+q}$ is a local isometric immersion with $q\leq k+1$, then its second fundamental form is non-adapted, and its normal bundle is nonflat.
\end{enumerate}
\end{theorem}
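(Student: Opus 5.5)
Part (a) comes first. If $\alpha$ is adapted, the Gauss equation shows that each restriction $\alpha_j:=\alpha|_{TP_j\times TP_j}$ satisfies the Gauss equation of $P_j$ on its own. It also forces orthogonality: for $X_i,Y_i\in TP_i$ and $X_j,Y_j\in TP_j$ with $i\neq j$, the curvature $R(X_i,X_j,Y_j,Y_i)=0$. By the Gauss equation this equals $\langle\alpha(X_i,Y_i),\alpha(X_j,Y_j)\rangle-\langle\alpha(X_i,Y_j),\alpha(X_j,Y_i)\rangle$, and the second term vanishes by adaptedness. Hence the first normal spaces $\mathcal N_j:=\operatorname{span}\alpha(TP_j,TP_j)$ are mutually orthogonal at each point. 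This gives
$$q\geq\sum_{j=0}^k\dim\mathcal N_j .$$
It remains to bound each summand from below.

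For $j\geq1$, $P_j$ has nonzero sectional curvature $1/r_j^2$, so $\alpha_j\neq0$ and $\dim\mathcal N_j\geq1$. For $j=0$ we need $\dim\mathcal N_0\geq2$. Suppose instead that $\mathcal N_0$ is spanned by a single unit vector $\xi$. Then $\alpha_0=A\,\xi$ for a symmetric form $A$, and the Gauss equation gives $R^{N_0}=\tfrac12 A\owedge A$. Such a curvature tensor is that of a hypersurface in Euclidean space: its only nonzero sectional curvatures are $\lambda_a\lambda_b$ in the eigenbasis of $A$, and $\oRic=(\tr A)A-A^2$. Ricci-flatness then forces $\lambda_a(\tr A-\lambda_a)=0$ for every $a$. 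One checks that this implies $\operatorname{rank}A\leq1$: if two eigenvalues were nonzero, each would equal $\tr A$, and the trace condition gives a contradiction. Rank at most one means $R^{N_0}=0$, whereas $N_0$ is non-flat since it is Schwarzschild--Tangherlini. Summing the bounds yields $q\geq2+k$. Nothing in this argument used distinctness of the $\rho_j$.

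Part (b) rests on two steps. The first is that distinct $\rho_j$ force adaptedness whenever the normal bundle is flat, so the claimed conclusions follow if we show that a flat normal bundle produces adaptedness. The approach is as follows.

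With flat normal bundle, the Ricci equation makes all shape operators $A_\xi$ commute, so they are simultaneously diagonalizable. We therefore get principal normals $\eta_1,\dots,\eta_s$ and an orthogonal splitting $TW=\bigoplus E_a$ with $\alpha(X,Y)=\sum_a\langle X_a,Y_a\rangle\eta_a$, where $s\leq q$ counts the distinct nonzero principal normals plus possibly a zero one. Then
$$\oRic|_{E_a}=\bigl(\langle H',\eta_a\rangle-|\eta_a|^2\bigr)\mathrm{Id},\qquad H'=\sum_b(\dim E_b)\eta_b .$$
So each $E_a$ lies inside a single Ricci eigenspace. Since the Ricci eigenspaces are exactly the $TP_j$ (the eigenvalues $0,\rho_1,\dots,\rho_k$ being distinct), each $TP_j$ is a sum of some of the $E_a$. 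This makes $\alpha$ adapted, and part (a) then gives $q\geq k+2$, contradicting $q\leq k+1$. Hence the normal bundle is nonflat.

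The second step is to show that $\alpha$ itself cannot be adapted. If it were adapted with $q\leq k+1$, part (a) would be contradicted directly, so $\alpha$ is non-adapted. Combining the two steps proves (b).

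The main obstacle is the rank/eigenvalue bookkeeping in part (a) showing $\dim\mathcal N_0\geq2$, namely ruling out the Ricci-flat non-flat "hypersurface-type" curvature. The plan is to carry this out via the eigenvalue identity above: $\lambda_a\neq0$ implies $\lambda_a=\tr A$, and two or more nonzero eigenvalues then make $\tr A$ a multiple of itself, forcing it to vanish.
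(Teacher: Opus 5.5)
Your proposal is correct and follows essentially the same route as the paper. In (a) you use mutual orthogonality of the factors' first normal spaces via the Gauss equation, at least one normal direction per sphere, and the Ricci-flat hypersurface-type identity $(\tr A)A-A^2=0$ forcing $\operatorname{rank}A\leq 1$ and hence $R^{N_0}=0$. In (b) the Ricci equation gives commuting shape operators compatible with the Ricci eigenspaces $TP_j$, so $\alpha$ is adapted and (a) is contradicted; your common-eigenspace phrasing is just a repackaging of the paper's remark that each $A_{\xi}$ commutes with $\widehat{\oRic}$.

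The one slip is the parenthetical claim $s\leq q$. It is false in general, since the principal normals of a submanifold with flat normal bundle can outnumber the codimension (e.g.\ Cartan's isoparametric hypersurface in $\Sp^4\subset\R^5$). Your argument never uses it, so it is harmless.
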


The paper is organized as follows. Section~\ref{sec:EWP} recalls the Einstein warped products of Dajczer--Onti--Vlachos~\cite{DOV} and the Riemannian Schwarzschild--Tangherlini case, together with their codimension-two isometric immersions and curvature computations. Section~\ref{sec:proofs} proves Theorems~\ref{thm:1}--\ref{thm:3}. Further questions are collected in Section~\ref{sec:questions}.

\section{Preliminaries on Einstein warped product submanifolds}\label{sec:EWP}
In this section, we recall the warped product models of Dajczer--Onti--Vlachos \cite{DOV}, including their local isometric immersions of codimension two. The new ingredients are the curvature inhomogeneous criterion in Proposition~\ref{pro:N} and the global embedding result for Riemannian Schwarzschild--Tangherlini manifolds in Proposition~\ref{pro:embedding}.

\subsection{The Einstein warped products of codimension two}\label{sec:EWP-def}
Fix $n\geq4$ and $\rho\in\R$. By \cite[Example 1(a)]{DOV}, there exist a constant $c\neq0$ and a nonconstant smooth function $\varphi\colon I\to(0,\infty)$ on an open interval $I$ containing $0$ such that
\begin{equation}\label{eq:varphi'}
	\varphi'(t)^{2}=1-\frac{\rho}{n-1}\,\varphi(t)^{2}+\frac{c}{\varphi(t)^{\,n-3}},
\end{equation}
and
\begin{equation}\label{eq:size}
	0<\varphi'(t)^2<1,\qquad t\in I.
\end{equation}
In what follows, we allow any choice of $c$ and $\varphi$ with these
properties.

Let $L^2$ be a coordinate domain with coordinates $(t,u)$, where
$t\in I$, endowed with the metric $g_L=dt^2+\varphi'(t)^2\,du^2$. Define the warped product \cite{DOV}
$$N^n=L^2\times_{\varphi}\Sp^{n-2},\qquad g_N=dt^2+\varphi'(t)^2\,du^2+\varphi(t)^2 g_{\Sp^{n-2}},$$
where $\Sp^{n-2}$ carries the unit round metric and $\varphi(t)$ is the warping function. The assumption $c\neq 0$ excludes the space form case, since \eqref{eq:K_N} below shows that $c=0$ would force every sectional curvature of $N^n$ to equal $\frac{\rho}{n-1}$.

According to \cite[Proposition~4]{DOV}, the warped product $(N^n,g_N)$ is Einstein, namely $\oRic^N=\rho\,g_N$. It admits a local isometric immersion into Euclidean space as an \emph{$(n-2)$-rotational submanifold} with flat normal bundle, as shown in \cite[Example 1(a)]{DOV}. We recall this construction in the form used below.

Split $\R^{n+2}=\R^{3}\times\R^{n-1}$ and let $\mathrm{SO}(n-1)$ act by rotations on the second factor while fixing the axis $\R^{3}$ pointwise. Fixing a unit vector $e\in\R^{n-1}$, a \emph{profile} is an immersed surface
$$\gamma=(h,\varphi)\colon L^{2}\longrightarrow\R^{4}=\R^{3}\oplus\mathrm{span}\{e\},
\qquad h\colon L^{2}\to\R^{3},\quad \varphi=\langle\gamma,e\rangle>0,$$
where the last condition means that $\gamma(L^{2})$ is disjoint from the axis. The $(n-2)$-rotational submanifold generated by $\gamma$ is the orbit of $\gamma(L^{2})$ under $\mathrm{SO}(n-1)$, parametrized by
\begin{equation}\label{eq:rot-param}
	f\colon L^{2}\times\Sp^{n-2}\longrightarrow\R^{n+2},\qquad
	f(x,y)=\bigl(h(x),\,\varphi(x)\,\phi(y)\bigr),
\end{equation}
where $\phi\colon\Sp^{n-2}\hookrightarrow\R^{n-1}$ is the inclusion of the unit round sphere, with induced metric $f^{*}\langle\cdot,\cdot\rangle=\gamma^{*}\langle\cdot,\cdot\rangle+\varphi^{2}g_{\Sp^{n-2}}$. Hence, $f$ induces $g_N$ precisely when the profile is isometric to the base, $\gamma^{*}\langle\cdot,\cdot\rangle=g_L$. Since $d\varphi=\varphi'\,dt$, this amounts to
\begin{equation}\label{eq:h-metric}
	h^{*}\langle\cdot,\cdot\rangle=(1-\varphi'^{2})\,dt^{2}+\varphi'^{2}\,du^{2}.
\end{equation}
The right-hand side of \eqref{eq:h-metric} is positive definite precisely when $0<\varphi'^2<1$. Moreover, \eqref{eq:varphi'} and \eqref{eq:size} imply that the metric in \eqref{eq:h-metric} is real analytic, and hence it admits a local isometric immersion $h$ into $\R^3$ by the Cartan--Janet theorem \cite{HanHong}.

To detect curvature inhomogeneity in this and the subsequent models, we use the \emph{Kretschmann scalar}, namely the squared norm of the Riemann curvature tensor.

\begin{proposition}\label{pro:N}
	The Kretschmann scalar of the Einstein warped product $(N^n,g_N)$ is
	\begin{equation}\label{eq:RN}
		|R^N|^2=\frac{2n}{n-1}\rho^2+(n-1)(n-2)^2(n-3)\frac{c^2}{\varphi^{2n-2}}.
	\end{equation}
	In particular, $(N^n,g_N)$ is curvature inhomogeneous.
\end{proposition}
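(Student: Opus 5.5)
We compute the curvature of $g_N$ in an adapted orthonormal frame, decompose $R^N$ into its Ricci part and Weyl part, and read off $|R^N|^2$. Then we use that $\varphi$ is nonconstant.

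\emph{Frame and sectional curvatures.} Take $e_1=\partial_t$, $e_2=\varphi'^{-1}\partial_u$, and an orthonormal frame $e_3,\dots,e_n$ of $\varphi^{-1}T\Sp^{n-2}$. Standard warped product formulas apply. The base $L^2$ has metric $dt^2+\varphi'^2du^2$ with Gaussian curvature $-\varphi'''/\varphi'$. In the mixed directions, with $X$ tangent to $L$, the curvature is $-\mathrm{Hess}\,\varphi(X,X)/\varphi$. For $e_1$ we have $\mathrm{Hess}\,\varphi(\partial_t,\partial_t)=\varphi''$. For $e_2$ we have $\nabla_{\partial_u}\partial_u=-\varphi'\varphi''\partial_t$, so $\mathrm{Hess}\,\varphi(e_2,e_2)=\varphi''$ too. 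The fiber planes have curvature $(1-\varphi'^2)/\varphi^2$. Differentiating \eqref{eq:varphi'} gives
\[
\varphi''=-\frac{\rho}{n-1}\varphi-\frac{(n-3)c}{2\varphi^{n-2}},
\]
and differentiating again gives $\varphi'''/\varphi'$. Set $\kappa=\rho/(n-1)$ and $a=c/\varphi^{n-1}$. Then we expect (this is presumably the paper's \eqref{eq:K_N})
\[
K(e_1,e_2)=\kappa-\tfrac{(n-2)(n-3)}{2}a,\qquad
K(e_i,e_\alpha)=\kappa+\tfrac{n-3}{2}a\ (i\le 2<\alpha),\qquad
K(e_\alpha,e_\beta)=\kappa-a.
\]
A consistency check: each Ricci eigenvalue sums to $(n-1)\kappa=\rho$. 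All mixed curvature components $R(e_a,e_b,e_c,e_d)$ with distinct index pairs vanish. This follows from the $\mathrm{SO}(n-1)$ symmetry and from the fact that the Hessian of $\varphi$ on $L$ is $\varphi''\,g_L$. So the frame diagonalizes the curvature operator on 2-forms.

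\emph{Kretschmann scalar.} For a curvature operator diagonal on $e_a\wedge e_b$, we have $|R|^2=4\sum_{a<b}K_{ab}^2$. Write $R=\kappa\,g\owedge g/2+W$. Since $N$ is Einstein, the cross term vanishes, so $|R|^2=2n(n-1)\kappa^2+|W|^2$. Here $2n(n-1)\kappa^2=\frac{2n}{n-1}\rho^2$. The Weyl part has sectional values equal to the $a$-terms above, so
\[
|W|^2=4a^2\Bigl[\tfrac{(n-2)^2(n-3)^2}{4}+2(n-2)\tfrac{(n-3)^2}{4}+\tfrac{(n-2)(n-3)}{2}\Bigr].
\]
This simplifies to $(n-2)(n-3)a^2\,[(n-2)(n-3)+2(n-3)+2]=(n-1)(n-2)^2(n-3)a^2$, which gives \eqref{eq:RN}.

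\emph{Inhomogeneity.} Curvature homogeneity would make $|R^N|^2$ constant, since a linear isometry pulling back $R$ preserves its norm. But $c\ne0$, $n\ge4$, and $\varphi$ is nonconstant on the connected $I$. Hence $\varphi^{2-2n}$ is nonconstant, and so is $|R^N|^2$.

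The main obstacle is the correct computation of $K(e_1,e_2)$ via $\varphi'''/\varphi'$, together with verifying that the off-diagonal curvature terms vanish. Both are routine but error-prone.
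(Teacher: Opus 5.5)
Your proposal is correct and follows the paper's proof. You use the same frame, the same warped-product sectional curvatures \eqref{eq:K_N}, the same diagonality of the curvature operator, and the same formula $|R^N|^2=4\sum_{a<b}K_{ab}^2$; splitting off the constant-curvature part and using the Einstein condition to kill the cross term is just a tidier way to organize the same substitution.

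One small difference: the paper uses $\varphi'\neq0$ from \eqref{eq:size} to conclude that $|R^N|^2$ is nonconstant on every nonempty open set, and this nowhere-locally-homogeneous form is what is later combined with Corollary~\ref{cor:nohyp}. Your argument, which only uses that $\varphi$ is nonconstant, yields just the stated global curvature inhomogeneity.
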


\begin{proof}
	Take the orthonormal frame
	$$e_1=\partial_t,\qquad e_2=\frac{1}{\varphi'}\partial_u,\qquad e_\alpha=\frac{1}{\varphi}\bar e_\alpha,\quad 3\leq\alpha\leq n,$$
	with $\{\bar e_\alpha\}$ a $g_{\Sp^{n-2}}$-orthonormal frame on the fibre, so that $e_1,e_2$ are tangent to the base $L^2$ and $e_3,\ldots,e_n$ to the fibre $\Sp^{n-2}$.

	For the warped product $N^n=L^2\times_{\varphi}\Sp^{n-2}$, the curvature formulas of \cite[Proposition 3.2]{Chen2017} give three types of sectional curvatures
	\begin{equation}\label{eq:K}
		K_{12}=K^{L^2},\quad K_{i\alpha}=-\frac{H^{\varphi}(e_i,e_i)}{\varphi}\ (i=1,2),\quad K_{\alpha\beta}=\frac{K^{\Sp^{n-2}}-|\nabla\varphi|^2}{\varphi^2},
	\end{equation}
	where $K^{L^2}$ and $K^{\Sp^{n-2}}$ are the sectional curvatures of $L^2$ and $\Sp^{n-2}$, and $H^{\varphi}$ is the Hessian of $\varphi$ on $(L^2,g_L)$.

	Since $\varphi$ depends only on $t$, on $(L^2,g_L)$ one computes
	$$\nabla\varphi=\varphi'\,\partial_t,\qquad |\nabla\varphi|^2=\varphi'^2,\qquad H^{\varphi}=\varphi''\,g_L,\qquad K^{L^2}=-\frac{\varphi'''}{\varphi'},$$
	while $K^{\Sp^{n-2}}=1$. Substituting these identities into \eqref{eq:K} and using \eqref{eq:varphi'}, we obtain
	\begin{equation}\label{eq:K_N}
		\begin{aligned}
			K_{12}&=-\frac{\varphi'''}{\varphi'}=\frac{\rho}{n-1}-\frac{(n-2)(n-3)c}{2\varphi^{n-1}},\\
			K_{1\alpha}=K_{2\alpha}&=-\frac{\varphi''}{\varphi}=\frac{\rho}{n-1}+\frac{(n-3)c}{2\varphi^{n-1}},\\
			K_{\alpha\beta}&=\frac{1-\varphi'^2}{\varphi^2}=\frac{\rho}{n-1}-\frac{c}{\varphi^{n-1}}.
		\end{aligned}
	\end{equation}

	By \cite[Proposition 3.2]{Chen2017}, the curvature operator is diagonal in this frame. Hence,
	$$|R^N|^2=4\left(K_{12}^2+2(n-2)K_{1\alpha}^2+\frac{(n-2)(n-3)}{2}K_{\alpha\beta}^2\right),$$
	and substitution of \eqref{eq:K_N} gives \eqref{eq:RN}. Since $c\neq0$ and $\varphi'\neq0$, $|R^N|^2$ is nonconstant on every nonempty open set, whereas curvature homogeneity would make every scalar curvature contraction constant. Hence, $(N^n,g_N)$ is nowhere locally curvature homogeneous and, in particular, curvature inhomogeneous.
\end{proof}

By Corollary~\ref{cor:nohyp}, or alternatively by Ryan's classification of Einstein hypersurfaces~\cite{Ryan1969}, $(N^n,g_N)$ admits no local isometric immersion as a hypersurface in any real space form. Codimension zero is also impossible, since a codimension-zero Euclidean isometric immersion would be a local isometry and would force $N^n$ to be flat. Consequently, the local isometric immersion $f$ given in \eqref{eq:rot-param} has minimum codimension two.

\subsection{The complete Ricci-flat case: Riemannian Schwarzschild--Tangherlini manifolds}\label{sec:RFS}
Fix $n\geq4$ and set $\rho=0,~\mu=\frac{n-3}{2}$, and $c=-\mu^{n-3}$. Following the complete construction of Dajczer--Onti--Vlachos~\cite[Example~2]{DOV}, let $\varphi\in C^\infty([0,\infty))$ be the unique nonconstant solution of
\begin{equation}\label{eq:varphi-RF}
	\varphi'^{2}=1-\left(\frac{\mu}{\varphi}\right)^{n-3},\qquad \varphi(0)=\mu,\qquad \varphi'(0)=0,
\end{equation}
characterized by $\varphi'(t)>0$ for every $t>0$. Differentiating \eqref{eq:varphi-RF} shows that $\varphi$ is the solution of the initial value problem
\begin{equation}\label{eq:varphi-RF2}
	\varphi''=\frac{n-3}{2}\,\frac{\mu^{n-3}}{\varphi^{\,n-2}},\qquad \varphi(0)=\mu,\qquad \varphi'(0)=0.
\end{equation}
Since \eqref{eq:varphi-RF2} is invariant under $t\mapsto-t$, uniqueness shows that $\varphi$ is even. Moreover, $\varphi''(0)=1$ and $\varphi''>0$, so $\varphi'$ is odd and $\varphi'>0$ on $(0,\infty)$. Consequently, $\varphi\geq\mu$ and $|\varphi'|<1$, so the solution neither reaches the singular set $\varphi=0$ nor escapes to infinity in finite time. It therefore extends smoothly to all of $\R$. In particular,
\begin{equation}\label{eq:varphi-expansion}
	\varphi(t)=\mu+\frac12t^2+O(t^4)\qquad\text{as }t\to0.
\end{equation}
The even extension is used to verify smoothness at the origin $o$. In what follows, $t\geq0$ denotes the radial coordinate in the polar description of $L_0^2\cong\R^2$ below.

Endow $L_0^2\setminus\{o\}$ with the rotationally invariant metric
$$g_{L_0}=dt^2+\varphi'(t)^2\,d\theta^2,\qquad \theta\in [0,2\pi).$$
Since $\varphi'$ is odd with $\varphi''(0)=1$, this metric extends smoothly across $o$ by \cite[Section~1.4]{Petersen}, and $L_0^2$ is complete. Now we obtain the \emph{Riemannian Schwarzschild--Tangherlini manifold}
$$N_0^n=L_0^2\times_\varphi\Sp^{n-2},\qquad g_{N_0}=dt^2+\varphi'(t)^2\,d\theta^2+\varphi^2g_{\Sp^{n-2}}.$$
It is complete since the base $L_0^2$ is complete, the fibre $\Sp^{n-2}$ is compact, and $\varphi\geq\mu>0$. Meanwhile, $g_{N_0}$ extends smoothly across $B:=\{o\}\times\Sp^{n-2}\cong\Sp^{n-2}(\mu)$, which is totally geodesic since $\nabla\varphi(o)=0$. By \eqref{eq:varphi-RF}, restriction of $N_0^n$ to $t>0$, namely $N_0^n\setminus B$, belongs to the local Ricci-flat family of Subsection~\ref{sec:EWP-def}, so $\oRic^{N_0}=0$ everywhere by continuity.

On $N_0^n\setminus B$, let $r=\varphi(t)$ and $p(r)=1-\left(\frac{\mu}{r}\right)^{n-3}$, then
\begin{equation}\label{eq:gN0}
	g_{N_0}=p(r)\,d\theta^2+p(r)^{-1}dr^2+r^2g_{\Sp^{n-2}},\qquad r>\mu.
\end{equation}

\begin{remark}\label{rem:period}
	The metric \eqref{eq:gN0} is obtained by the Wick rotation $\tau=\oi\theta$ of the Lorentzian Schwarzschild--Tangherlini metric \cite{Schwarzschild16,Tangherlini63}
	\begin{equation*}
		g_{\mathrm{Lor}}=-p(r)\,d\tau^{2}+p(r)^{-1}dr^{2}+r^{2}g_{\Sp^{n-2}},\qquad \tau\in\R, \quad r>\mu,
	\end{equation*}
	on the static exterior region of the $n$-dimensional black hole. Smoothness at $r=\mu$ requires the Euclidean time $\theta$ to have period $\frac{4\pi\mu}{n-3}$, the inverse Hawking temperature \cite{Hawking77}. In physical units, the scale parameter $\mu$ is related to the black-hole mass $\mathcal M$ by $\mu^{n-3}=\frac{16\pi G\,\mathcal{M}}{(n-2)\,\Omega_{n-2}}$, where $G$ is the gravitational constant and $\Omega_{n-2}=\frac{2\pi^{(n-1)/2}}{\Gamma((n-1)/2)}$ is the area of $\Sp^{n-2}$. Our normalization $\mu=\frac{n-3}{2}$ gives Euclidean time period $2\pi$, and the added locus is $B$.
\end{remark}

Applying Proposition~\ref{pro:N} on $N_0^n\setminus B$ gives
\begin{equation}\label{eq:RN0}
	|R^{N_0}|^2=(n-1)(n-2)^2(n-3)\frac{\mu^{2n-6}}{r^{2n-2}}.
\end{equation}
The right-hand side is strictly decreasing for $r>\mu$ and is positive as $r\downarrow\mu$. Since $R^{N_0}$ extends smoothly across $B$, it follows that $R^{N_0}$ is nonzero at every point of $N_0^n$, including the points of $B$. Every nonempty open subset of $N_0$ contains points with distinct values of $r$. Hence, $(N_0^n, g_{N_0})$ is nowhere locally curvature homogeneous and, in particular, curvature inhomogeneous.

We next recall its rotational isometric immersion from~\cite[Example~2]{DOV}.  By \eqref{eq:varphi-RF} and \eqref{eq:varphi-RF2}, the function $Q:=1-\varphi'^2-\varphi''^2$ is nonnegative and is positive for $t>0$. Indeed, one has $Q=(\frac{\mu}{\varphi})^{n-3}-(\frac{\mu}{\varphi})^{2n-4}$. Using \eqref{eq:varphi-expansion}, we obtain
$$Q(t)=\frac{n-1}{n-3}\,t^2+O(t^4)\qquad\text{as }t\to0,$$
and, in particular, $Q(0)=Q'(0)=0$ and $Q''(0)=\frac{2n-2}{n-3}>0$. Since $Q$ is even, its positive square root on $t>0$ has a smooth odd extension $\eta$ across $0$. Define
$$\psi(t)=\int_0^t\eta(s)\,ds.$$
Then $\psi$ is smooth and even, $\psi'>0$ on $(0,\infty)$, and $\psi'^2=1-\varphi'^2-\varphi''^2$ with $\psi(0)=0$.
In polar coordinates on $L_0^2\setminus\{o\}$, define
\begin{equation}\label{eq:fN0-global}
	f_{N_0}(t,\theta,y)=\bigl(\psi(t),\,\varphi'(t)\cos\theta,\,\varphi'(t)\sin\theta,\,\varphi(t)y\bigr)\in\R^{n+2}.
\end{equation}
By Whitney's theorem \cite{Whitney43}, every smooth even function of $t$ is a smooth function of $t^2$ near the origin. Hence, the even functions $\psi$ and $\varphi$ define smooth radial functions on $L_0^2$. Moreover, since $\varphi'$ is smooth and odd, $\frac{\varphi'(t)}{t}$ is likewise a smooth function of $t^2$ with $\varphi''(0)=1$. In Cartesian coordinates $x^1=t\cos\theta$ and $x^2=t\sin\theta$, the middle block in \eqref{eq:fN0-global} is $\frac{\varphi'(t)}{t}(x^1,x^2)$ and is therefore smooth at $o$. Thus, \eqref{eq:fN0-global} extends smoothly across $B$. At $t=0$, its middle block differentiates as the identity on the two Cartesian directions, while the last block has rank $n-2$ on $T\Sp^{n-2}$, and hence the extension has rank $n$. A direct computation on $N_0^n\setminus B$ gives $f_{N_0}^{*}\langle\cdot,\cdot\rangle=g_{N_0}$. Since both sides are smooth on $N_0^n$, this identity extends across $B$ by continuity. Thus, $f_{N_0}$ is a global isometric immersion. Its normal bundle is flat on $N_0^n$ by \cite[Example~2]{DOV}.

\begin{proposition}\label{pro:embedding}
	The map $f_{N_0}\colon N_0^n\to\R^{n+2}$ of \eqref{eq:fN0-global} is an isometric embedding onto a submanifold of $\R^{n+2}$, and its codimension two is the smallest.
\end{proposition}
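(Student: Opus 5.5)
We must show that $f_{N_0}$ is an injective immersion which is proper, hence a closed embedding, and then that no lower codimension is possible. The immersion property and the isometry identity $f_{N_0}^{*}\langle\cdot,\cdot\rangle=g_{N_0}$ were already established above, including across $B$. So the first task is injectivity, and the second is properness, which implies that $f_{N_0}$ is a homeomorphism onto a closed image.

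\emph{Injectivity.} Suppose $f_{N_0}(t_1,\theta_1,y_1)=f_{N_0}(t_2,\theta_2,y_2)$ with $t_i\geq0$.
\begin{itemize}
\item The first coordinates give $\psi(t_1)=\psi(t_2)$. Since $\psi'=\eta>0$ on $(0,\infty)$, $\psi$ is strictly increasing on $[0,\infty)$, so $t_1=t_2=:t$.
\item The last block gives $\varphi(t)y_1=\varphi(t)y_2$. Since $\varphi\geq\mu>0$, this forces $y_1=y_2$.
\item If $t>0$, then $\varphi'(t)>0$, and the middle block $\varphi'(t)(\cos\theta,\sin\theta)$ determines $\theta$ modulo $2\pi$.
\item If $t=0$, both points lie in $B$ with the same $y$, so they coincide. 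In Cartesian coordinates this is simply the statement that $\frac{\varphi'(t)}{t}(x^1,x^2)$ together with $t$ determines $(x^1,x^2)$.
\end{itemize}
Hence $f_{N_0}$ is injective.

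\emph{Properness.} Since $\psi$ is even and $\psi(0)=0$, we have $|f_{N_0}|\geq\psi(t)$. Thus it suffices to show $\psi(t)\to\infty$ as $t\to\infty$. Alternatively, use $|f_{N_0}|\geq\varphi(t)$ together with $\varphi(t)\to\infty$. The latter follows because $\varphi'$ is increasing and positive on $(0,\infty)$, so $\varphi(t)\geq\mu+\varphi'(1)(t-1)$ for $t\geq1$. Hence preimages of bounded sets lie in $\{t\leq T\}\cong\overline{B}_T\times\Sp^{n-2}$, which is compact. An injective proper immersion is an embedding onto a closed submanifold, which proves the first claim.

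\emph{Minimality of the codimension.} Codimension zero is impossible, because a local isometry into $\R^n$ would make $N_0^n$ flat, contradicting \eqref{eq:RN0}. Codimension one is excluded as follows. Restrict to any open subset of $N_0^n\setminus B$. This subset is Ricci-flat, so its Ricci eigenvalues are constant. By \eqref{eq:RN0} it is curvature inhomogeneous, since $r$ is nonconstant on every open set. Corollary~\ref{cor:nohyp} then forbids even a local isometric immersion as a hypersurface of $\R^{n+1}$. Alternatively, Ryan's classification applies: an Einstein hypersurface with $\rho=0$ and $n\geq4$ is flat.

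The main obstacle is only bookkeeping at $B$, namely checking that injectivity and the embedding property hold there. Both follow from the Cartesian smoothness already established and from the rank-$n$ computation at $t=0$.
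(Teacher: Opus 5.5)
Your proof is correct and follows the paper's route. Injectivity comes from the coordinate blocks; you recover $t$ from the strictly increasing $\psi$ rather than from $|\varphi(t)y|=\varphi(t)$, which is equally valid. Properness comes from $|f_{N_0}|\geq\varphi(t)$, $\varphi(t)\to\infty$ and compactness of $\{t\leq T\}$, and minimality from nonflatness plus Corollary~\ref{cor:nohyp}. Note that your first suggested properness route, via $\psi(t)\to\infty$, fails for $n\geq6$: there $\psi'=\sqrt{Q}\sim(\mu/\varphi)^{(n-3)/2}$ with $\varphi\sim t$, so $\psi$ is bounded, and it is the $\varphi$ route that carries the argument.
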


\begin{proof}
	We only need to verify injectivity and properness. Throughout we use that $\varphi\colon[0,\infty)\to[\mu,\infty)$ is smooth with $\varphi'>0$ on $(0,\infty)$, hence strictly increasing, and that $\varphi(t)\to\infty$ as $t\to\infty$. Thus, $\varphi$ is a homeomorphism onto $[\mu,\infty)$.
	
	Since $\varphi(t)>0$ is strictly increasing, the last block determines $t$ from its norm $\varphi(t)$ and then determines $y$ by division by $\varphi(t)$. If $t>0$, the middle block determines $\theta$ because $\varphi'(t)>0$, while for $t=0$ the angular variable collapses at $o$. Thus, $f_{N_0}$ is injective.
	
	The function $t\colon N_0^n\to[0,\infty)$ is proper. Indeed,  $g_{L_0}$ is written in geodesic polar form, so $t$ is the distance to $o$ in the complete surface $(L_0^2,g_{L_0})$. By the Hopf--Rinow theorem, the closed ball $\overline{\B}^2_a(o)\subset L_0^2$ is compact, and therefore so is
	$$\{t\leq a\}=\overline{\B}^2_a(o)\times\Sp^{n-2}\subset N_0^n$$
	for every $a\geq0$. The computation above gives $|f_{N_0}(x)|\geq\varphi(t(x))$ for all $x$. Let $K\subset\R^{n+2}$ be compact, and choose $b\geq\mu$ with $K\subset\overline{\B}^{n+2}_b(0)\subset\R^{n+2}$. Then
	$$f_{N_0}^{-1}(K)\subseteq\{x\in N_0^n:\varphi(t(x))\leq b\}=\{t\leq\varphi^{-1}(b)\}.$$
	Thus, $f_{N_0}^{-1}(K)$ is a closed subset of a compact set, hence compact, and $f_{N_0}$ is proper.

	An injective proper immersion is a closed embedding, so $f_{N_0}$ is a global isometric embedding of codimension two onto a submanifold (closed as a subset) of $\R^{n+2}$. On every connected open subset, codimension zero is impossible because $N_0^n$ is nonflat, while the curvature inhomogeneity established after \eqref{eq:RN0} and Corollary~\ref{cor:nohyp} rules out codimension one. Hence, the codimension two is the smallest even locally.
\end{proof}

\begin{remark}\label{rem:profile}
	For the coordinate expression \eqref{eq:gN0} on $N_0^n\setminus B$, set $s(r)=\sqrt{p(r)}$ and
	\begin{equation*}
		Z(r)=\int_\mu^r\sqrt{p(u)^{-1}-1-s'(u)^2}\,du.
	\end{equation*}
	One can rewrite the codimension-two isometric embedding \eqref{eq:fN0-global} as
	\begin{equation}\label{eq:fN0}
		\Psi_{N_0}(r,\theta,y)=\bigl(s(r)\cos\theta,\,s(r)\sin\theta,\,Z(r),\,r y\bigr),\qquad r>\mu.
	\end{equation}
	Here, the first three ambient coordinates have been cyclically permuted to facilitate comparison with the physics literature, and this does not affect the geometric properties of the embedding. For $n=4$, the Lorentzian antecedents of~\eqref{eq:fN0} go back to Kasner~\cite{Kasner21} and Fronsdal~\cite{Fronsdal59}, while the full-symmetry-equivariant codimension-two Lorentzian embeddings were later classified by Paston and Sheykin~\cite{PastonSheykin2012}. Under $\mu=\frac{1}{2}$, \eqref{eq:fN0} agrees with the Riemannian Wick rotation of Fronsdal's embedding. The detailed comparison is given in Appendix~\ref{app:comparison}.
\end{remark}

\section{Proof of the main results}\label{sec:proofs}
In this section, we prove Theorems~\ref{thm:1},~\ref{thm:2} and~\ref{thm:3}.

\subsection{Cylindrical products of codimension two: proof of Theorem~\ref{thm:1}}\label{sec:cyl}
\begin{proof}
Let $(N^n,g_N)$ be the non-Ricci-flat Einstein warped product of Subsection~\ref{sec:EWP-def} ($\rho\neq 0$ and $c\neq 0$), and let
$M^{n+\ell}=N^n\times\R^\ell$. The curvature tensor of $M$ satisfies
$$R^M\bigl((X,V),(Y,W)\bigr)(Z,U)=\bigl(R^N(X,Y)Z,\,0\bigr),$$
for $(X,V),(Y,W),(Z,U)\in T_xN^n\oplus T_z\R^\ell$. Therefore, the Ricci curvature is
\begin{equation*}
	\oRic^M=\oRic^N\oplus 0=\rho\,g_N\oplus 0.
\end{equation*}
Since $\rho\neq 0$, $M$ has exactly two distinct constant Ricci eigenvalues, namely $\rho$ and $0$, with multiplicities $n$ and $\ell$, respectively. This proves part (i).

The Kretschmann scalar of $M$ is
$$|R^M|^2(x,z)=|R^N|^2(x),$$
which is nonconstant on every nonempty open set by Proposition \ref{pro:N}. Thus $M$ is nowhere locally curvature homogeneous, proving part (ii).

For part (iii), consider the cylindrical map
$$F=f\times\operatorname{id}_{\R^\ell}\colon M^{n+\ell}\longrightarrow\R^{n+2}\times\R^\ell=\R^{n+\ell+2},\qquad F(x,z)=(f(x),z),$$
where $f\colon N^n\to\R^{n+2}$ is the codimension-two isometric immersion of Subsection~\ref{sec:EWP-def}. Since $dF_{(x,z)}(X,V)=(df_xX,V)$ and $f$ is an isometric immersion, so is $F$, with normal spaces $\nu_{(x,z)}F=\nu_xf\oplus\{0\}$. In particular, the normal bundle of $F$ is flat. Codimension zero is impossible because $M$ is nonflat. By part (ii), every nonempty open subset of $M$ is curvature inhomogeneous, while its Ricci eigenvalues remain constant. Consequently, Corollary~\ref{cor:nohyp}, applied to any sufficiently small connected open subset, rules out a local codimension-one immersion, so the codimension two attained by $F$ is the minimum local Euclidean codimension. This proves part (iii) and completes the proof of Theorem~\ref{thm:1}.
\end{proof}

\subsection{Spherical products of codimension \texorpdfstring{$\geq 2$}{>=2}: proofs of Theorems~\ref{thm:2} and \ref{thm:3}}\label{sec:sph}
\begin{proof}[Proof of Theorem~\ref{thm:2}]
Let $(N_0^n,g_{N_0})$ be the complete manifold of Subsection~\ref{sec:RFS}, and let $W_k^{m}=N_0^n\times\prod_{j=1}^k\Sp^{n_j}(r_j)$, where $k\geq 1$, $n_j\geq 2$, and the radii $r_j>0$ are chosen so that the constants $\rho_j=\frac{n_j-1}{r_j^2}$ are pairwise distinct. The Riemannian product $W_k^{m}$ is complete. Its Ricci tensor is
\begin{equation*}
	\oRic^{W_k}=0\oplus\bigoplus_{j=1}^k\rho_j\,g_{\Sp^{n_j}(r_j)}.
\end{equation*}
Hence, $W_k^{m}$ has exactly $k+1$ distinct constant Ricci eigenvalues, namely $0,\rho_1,\ldots,\rho_k$, with multiplicities $n,n_1,\ldots,n_k$. This proves part~(i).

The Kretschmann scalar is
$$|R^{W_k}|^2=|R^{N_0}|^2+\sum_{j=1}^k\frac{2n_j(n_j-1)}{r_j^4},$$
and the first term is nonconstant on every nonempty open set by \eqref{eq:RN0}. Thus, $W_k^m$ is nowhere locally curvature homogeneous, proving part~(ii).

Let $\iota_j\colon\Sp^{n_j}(r_j)\hookrightarrow\R^{n_j+1}$ be the standard embedding. The extrinsic product
\begin{equation}\label{eq:Fk}
	F_k=f_{N_0}\times\iota_1\times\cdots\times\iota_k\colon W_k^{m}\longrightarrow\R^{n+2}\times\R^{n_1+1}\times\cdots\times\R^{n_k+1}=\R^{m+k+2}
\end{equation}
is a global isometric embedding, because each factor map is an embedding (see Proposition~\ref{pro:embedding}). Its normal bundle is the direct sum of the flat normal bundle of $f_{N_0}$ and the $k$ normal line bundles of the round-sphere embeddings. Hence, it is flat and has rank $k+2$. Its second fundamental form is adapted to the product structure, and Theorem~\ref{thm:3}(a) below shows that $k+2$ is the smallest possible codimension within that class. This proves part~(iii) and completes the proof of Theorem~\ref{thm:2}.
\end{proof}

It remains to establish the sharpness of the codimension. We keep the notation $P_0=N_0^n$, $P_j=\Sp^{n_j}(r_j)$ of the introduction, so that $TW_k^m=\bigoplus_{j=0}^k TP_j$.

\begin{proof}[Proof of Theorem~\ref{thm:3}]
	(a) Let $\Phi$ be an isometric immersion of codimension $q$ whose second fundamental form $\alpha:=\alpha_\Phi$ is adapted, i.e., $\alpha(TP_i,TP_j)=0$ for $i\neq j$. Fix a point of $W_k^m$, and all spaces in the following argument are understood at this point. For unit vectors $X\in TP_i$ and $Y\in TP_j$ with $i\neq j$, the Gauss equation gives
	$$0=K^{W_k}(X,Y)=\langle\alpha(X,X),\alpha(Y,Y)\rangle-|\alpha(X,Y)|^2=\langle\alpha(X,X),\alpha(Y,Y)\rangle.$$
	Polarizing first in $X$ and then in $Y$ shows that the subspaces
	$$E_i:=\operatorname{span}\{\alpha(X_1,X_2):X_1,X_2\in TP_i\},\qquad 0\leq i\leq k,$$
	are mutually orthogonal. Each sphere factor $P_j$, $1\leq j\leq k$, has positive sectional curvature $\frac{1}{r_j^2}$, so $E_j\neq0$. These $k$ subspaces occupy at least $k$ normal dimensions, leaving $\dim E_0\leq q-k$.
	
	Assume, to the contrary, that $q\leq k+1$. Then $\dim E_0\leq1$. If $E_0=0$, the Gauss equation immediately gives $R^{N_0}=0$. Otherwise, let a unit normal vector $\xi$ span $E_0$ and write $A:=A_\xi|_{TN_0}$ for the restriction to $TN_0^n$ of the shape operator of $\xi$. Then $\alpha(X,Y)=\langle AX,Y\rangle\,\xi$ for all $X,Y\in TN_0^n$. On $TN_0^n$, the Gauss equation takes the hypersurface form
	\begin{equation}\label{eq:R}
		R^{N_0}(X,Y,Z,V)=\langle AX,V\rangle\langle AY,Z\rangle-\langle AX,Z\rangle\langle AY,V\rangle.
	\end{equation}
	Since $A$ is self-adjoint and $N_0^n$ is Ricci-flat, contraction of \eqref{eq:R} yields $0=(\tr A)A-A^2$. If $\lambda_1,\ldots,\lambda_n$ are the eigenvalues of $A$, then $\lambda_i(\tr A-\lambda_i)=0$ for every $i$. Every nonzero eigenvalue equals $\tr A$, which is impossible for two or more nonzero eigenvalues. Hence, at most one eigenvalue of $A$ is nonzero. Equation~\eqref{eq:R} again forces $R^{N_0}=0$, contradicting the nonvanishing established after \eqref{eq:RN0}. Therefore $q\geq k+2$. The extrinsic product $F_k$ in \eqref{eq:Fk} attains this bound, without any assumption that the $\rho_j$ are distinct.

	(b) Assume the $\rho_j$ are pairwise distinct, and $\Phi\colon W_k^{m}\to\R^{m+q}$ is a local isometric immersion with $q\leq k+1$. Part (a) shows that its second fundamental form is non-adapted. Let $\{\xi_1,\ldots,\xi_q\}$ be a local orthonormal normal frame with corresponding shape operators $A_{\xi_a}$, $1\leq a\leq q$. If the normal bundle were flat, then the Ricci equation gives $[A_{\xi_a},A_{\xi_b}]=0$ for all $a,b$. Since $\Phi$ is a Euclidean isometric immersion, the Gauss equation yields 
	$$\widehat{\oRic}=\sum_{a=1}^q\bigl((\tr A_{\xi_a})A_{\xi_a}-A_{\xi_a}^2\bigr),$$
	where $\widehat{\oRic}$ is the Ricci operator of $W_k^{m}$. Since the shape operators commute pairwise, every $A_{\xi_a}$ commutes with $\widehat{\oRic}$. By Theorem~\ref{thm:2}(i), $\widehat{\oRic}$ has the pairwise distinct eigenvalues $0,\rho_1,\ldots,\rho_k$, with eigenspaces exactly $TN_0^n,T\Sp^{n_1}(r_1),\ldots,T\Sp^{n_k}(r_k)$. Hence, each $A_{\xi_a}$ preserves every factor $TP_i$. Therefore, for $X\in TP_i$ and $U\in TP_j$ with $i\neq j$,
	$$\langle\alpha(X,U),\xi_a\rangle=\langle A_{\xi_a}X,U\rangle=0,\qquad 1\leq a\leq q.$$
	Since $\xi_1,\ldots,\xi_q$ span the normal space, $\alpha$ is adapted, contradicting the first assertion. Thus, the normal bundle of $\Phi$ is not flat.
\end{proof}

\section{Further questions}\label{sec:questions}
The results suggest three natural questions. First, does $W_k^m$ admit a local isometric immersion into Euclidean space of codimension lower than $k+2$? By Theorem~\ref{thm:3}(b), any such immersion would necessarily have non-adapted second fundamental form and nonflat normal bundle. Second, do constant Ricci eigenvalues force curvature homogeneity for submanifolds with at least three distinct Ricci eigenvalues and minimum Euclidean codimension two? The restriction to at least three eigenvalues is natural. For exactly two distinct eigenvalues, Theorem~\ref{thm:1} already provides curvature inhomogeneous examples of minimum Euclidean codimension two. On the other hand, the examples in Theorem~\ref{thm:2} have codimension $k+2\geq3$, leaving the codimension-two case open. Third, complementary to the preceding two questions, can one characterize high-codimensional curvature homogeneous submanifolds, especially isoparametric submanifolds, of real space forms?

\appendix
\section{Comparison with the classical four-dimensional Schwarzschild embeddings}\label{app:comparison}
This appendix verifies the compatibility of our Riemannian embedding \eqref{eq:fN0} with the Lorentzian Schwarzschild embeddings of Kasner \cite{Kasner21} and Fronsdal \cite{Fronsdal59} types in the uniform parametrization of Paston--Sheykin \cite{PastonSheykin2012}. Let $n=4$, keep the notation of Subsection \ref{sec:RFS} with an arbitrary scale $\mu>0$, then
$$p(r)=1-\frac{\mu}{r},\qquad s(r)=\sqrt{p(r)},\qquad r>\mu.$$
The Lorentzian and Riemannian metrics to be compared are
\begin{align*}
	g_{\rm Lor}&=-p(r)\,d\tau^2+p(r)^{-1}\,dr^2+r^2g_{\Sp^2},\\
	g_{\rm Riem}&=\phantom{-}p(r)\,d\theta^2+p(r)^{-1}\,dr^2+r^2g_{\Sp^2}.
\end{align*}

Denote $\R^{p,q}$ by the flat space with $p$ timelike and $q$ spacelike directions. For constants $\alpha,\beta>0$, the exterior parts of the Kasner and Fronsdal types may be written respectively as
\begin{align}
	\Psi_K(r,\tau,y)
	&=\left(\alpha^{-1}s(r)\sin(\alpha\tau),
	\alpha^{-1}s(r)\cos(\alpha\tau),Z_K(r),r y\right)\in\R^{2,4},
	\label{eq:app-Kasner}\\
	\Psi_F(r,\tau,y)
	&=\left(\beta^{-1}s(r)\sinh(\beta\tau),
	\beta^{-1}s(r)\cosh(\beta\tau),Z_F(r),r y\right)\in\R^{1,5},
	\label{eq:app-Fronsdal}
\end{align}
where $y\in\Sp^2$ and
\begin{equation}\label{eq:app-heights}
	Z_K'(r)^2=p(r)^{-1}-1+\alpha^{-2}s'(r)^2,\qquad
	Z_F'(r)^2=p(r)^{-1}-1-\beta^{-2}s'(r)^2.
\end{equation}
Here $\alpha$ is free in the Kasner exterior embedding, and the original paper \cite{Kasner21} takes $\alpha=1$. Smoothness of the Fronsdal embedding across the Lorentzian horizon, namely $r=\mu$, selects $\beta=\frac{1}{2\mu}$ \cite{Fronsdal59}, and then
\begin{equation*}
	Z_F'(r)^2=p(r)^{-1}-1-4\mu^2s'(r)^2
	=\frac{\mu(r^2+\mu r+\mu^2)}{r^3}.
\end{equation*}

At the same scale, the Riemannian embedding relevant to \eqref{eq:fN0} of this paper is
\begin{equation}\label{eq:app-Riemannian-general}
	\Psi_{N_0,\mu}(r,\theta,y)=
	\left(2\mu s(r)\cos\frac{\theta}{2\mu},
	2\mu s(r)\sin\frac{\theta}{2\mu},
	Z_F(r),r y\right).
\end{equation}
It induces $g_{\rm Riem}$, and it extends smoothly across the bolt $B$ when $\theta$ has period $4\pi\mu$ by Remark \ref{rem:period}. Formula \eqref{eq:app-Riemannian-general} is precisely the Riemannian Wick rotation of the horizon-regular Fronsdal embedding \eqref{eq:app-Fronsdal}, since one sets $\tau=\oi\theta$ and converts the timelike ambient coordinate into a spacelike one. By contrast, the Kasner type \eqref{eq:app-Kasner} uses a negative-definite rotation plane, which accounts for the opposite sign of the $s'(r)^2$ term in \eqref{eq:app-heights}.

In the body of the paper, we normalize the Euclidean angular period to be $2\pi$. Since smoothness requires $4\pi\mu=2\pi$, this gives $\mu=\frac{1}{2}$, and \eqref{eq:app-Riemannian-general} reduces, up to an ambient permutation and rotation, to exactly \eqref{eq:fN0}. Thus $\mu=\frac{1}{2}$ reflects only the normalization adopted in the present paper. The comparison with the Paston--Sheykin formulas is valid at every common scale $\mu>0$.

\end{document}